\newtheorem{theorem}{Theorem}
\newtheorem{proposition}[theorem]{Proposition}
\newtheorem{corollary}[theorem]{Corollary}
\theoremstyle{definition}
\newcommand{\C}{\mathbb{C}}
\newcommand{\D}{\mathbb{D}}
\renewcommand{\O}{\mathscr{O}}
\newcommand{\R}{\mathbb{R}} 
\newcommand{\T}{\mathbb{T}}
\title{Subordination principle for Proper functions}
\author{Jan Wiegerinck }
\address{Science Park  904, Amsterdam} 
\email{J.J.O.O.Wiegerinck@uva.nl}
\author{Djire Ibrahim K.}
\address{Jagiellonian University, Department of Mathematics}
\email{Ibrahim.Djire@im.uj.edu.pl}
\keywords{Analytic disc, subordination, subharmonic, Lelong functional.}
\begin{document}

\begin{abstract} 

 Our gaol is to generalize Littlewood's Subordination Theorem to the situations where the functions are not globally subordinate. On using our result we establish a relation  between the moduli of the zeros  of the subordinate function  and the moduli of the zeros of the superordinate function. This fact has a consequence on the envelope of Lelong functional. At the end we give some coefficient inequalities of  the subordinate and superordinate functions.

\end{abstract}

\maketitle

\section{Introduction}

\smallskip\noindent
Let $f$ and $g$ be two holomorphic functions, with $f(0)=g(0)$. Suppose that, $f$ is bijective and   $g(\mathbb{D})\subset f(\mathbb{D}).$ Then $\omega(z)=f^{-1}(g(z))$ is analytic in $\mathbb{D},$ $\omega(0)=0,$ $|\omega(z)|<1$ and $g(z)=f\circ\omega(z).$ In general, an analytic function $g$ is  said to be subordinate to an analytic function $f$ if  $g(z)=f(\omega(z)),$  $|z|<1,$ for some analytic function $\omega$ with $|\omega(z)|\leq |z|.$ The superordinate function $f$ need not be univalent.
Littlewood's Subordination principle states that if $f$ and $g$ are analytic in the unit disk and if $g$ is subordinate to $f$, then, for $0<p<\infty$, $0<r<1$ we have: $$\int_0^{2\pi}|g(re^{i\theta})|^p d\theta\leq \int_0^{2\pi}|f(re^{i\theta})|^p d\theta.$$
Under its general form we have for any subharmonic function $u$ in some neighborhood of $f(\mathbb{D}) :$ $$\int_0^{2\pi}u\circ g(re^{i\theta}) d\theta\leq \int_0^{2\pi} u\circ f(re^{i\theta}) d\theta.$$
For more detail see $[1, chap.6].$

\section{Generalization}

\smallskip\noindent
We start by establishing some notation. For $r>0$, let  $D_r=\{z\in \C, |z|<r\}$ and $\D=D_1.$ For $X\subset \C$ open, $SH(X)$ denotes the set of all subharmonic functions on $X$ and $\O(D_r,X)$ the set of all holomorphic functions from $D_r$ to $X.$ An element $f\in\O(\D,X)$ is sometimes called an analytic disc in $X$ of center $f(0).$ The following theorem generalizes Littlewood's Subordination principle to the situations where $g$ is not subordinate to $f.$ 

\begin{theorem}
 Let $X$ be an open subset in $\mathbb{C}$,   and $f \in \O(\D,X)\cap C(\overline{\D},X)$ proper, then for all $u\in SH(X)$ such that $|u\circ f|$ is bounded on $\T$, for all  $g\in \O(\D,X)$ with $g(0)=f(0)$ and $g(\mathbb{D})\subset f(\mathbb{D})$ one has $$\int_0^{2\pi}u\circ g(re^{i\theta})d\theta\leq \int_0^{2\pi}u\circ f(e^{i\theta})d\theta \mbox{       for all        }r\in]0,1[.$$
\end{theorem}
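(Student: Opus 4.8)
\medskip
\noindent\textit{Proof strategy.}
The plan is to transfer everything to the domain $\Omega:=f(\D)$, onto which $f$ maps properly (equivalently, $f\colon\D\to\Omega$ is a finite branched covering). Since $f$ is proper it is non-constant, so $\Omega$ is open by the open mapping theorem; and since $f\in C(\overline{\D},X)$, the compact set $f(\overline{\D})$ contains $\overline{\Omega}$, so $\overline{\Omega}\subset X$ and $u$ is upper semicontinuous and bounded above on $\overline{\Omega}$, say $u\le M$ there. The decisive consequence of properness is the boundary correspondence $f(\T)=\partial\Omega$: the compact (hence closed) set $f(\overline{\D})$ contains $\Omega$, hence contains $\overline{\Omega}$, which gives $\partial\Omega\subset f(\T)$; conversely, were some sequence $z_k\to e^{i\theta_0}\in\T$ to have $f(z_k)$ lying in a compact set $K\subset\Omega$, the $z_k$ would lie in $f^{-1}(K)$, a compact subset of $\D$, which is impossible. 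Thus $f(\T)=\partial\Omega$, and the hypothesis that $|u\circ f|$ is bounded on $\T$ just says that $u$ is bounded on $\partial\Omega$.

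Next I would bring in the harmonic function on $\Omega$ carrying these boundary data: let $\mathcal H$ be the Perron--Wiener--Brelot solution of the Dirichlet problem on the bounded domain $\Omega$ with boundary data $u|_{\partial\Omega}$ (a bounded upper semicontinuous, hence resolutive, function; concretely $\mathcal H=\inf_j H^{\phi_j}_\Omega$ for continuous $\phi_j\downarrow u|_{\partial\Omega}$, $\phi_j\le M$). The two facts to prove are \textbf{(i)} $u\le\mathcal H$ on $\Omega$, and \textbf{(ii)} $\mathcal H(f(0))\le\frac1{2\pi}\int_0^{2\pi}u(f(e^{i\theta}))\,d\theta$. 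Fact (i) is the maximum principle: comparing $u$ with each superharmonic member of the Perron family for $\phi_j$ gives $u\le H^{\phi_j}_\Omega$, and $j\to\infty$ gives $u\le\mathcal H$. For (ii), note that $\mathcal H\circ f$ is harmonic on $\D$ and bounded above by $M$, and that for almost every $\theta$
\[
\limsup_{z\to e^{i\theta}}(\mathcal H\circ f)(z)\le u(f(e^{i\theta}));
\]
indeed $f(z)\to f(e^{i\theta})\in\partial\Omega$ by continuity of $f$, $u$ is upper semicontinuous, and $f(e^{i\theta})$ is a \emph{regular} boundary point of $\Omega$ for a.e.\ $\theta$ (the irregular boundary points of a bounded domain form a polar set $I$, and $f(e^{i\theta})\notin I$ for a.e.\ $\theta$: picking $v$ subharmonic near $\overline{\Omega}$ with $I\cap\overline{\Omega}\subset\{v=-\infty\}$, the function $v\circ f$ is subharmonic, bounded above and $\not\equiv-\infty$ on $\D$, so it has a finite radial limit a.e., forcing $v(f(e^{i\theta}))>-\infty$, i.e.\ $f(e^{i\theta})\notin I$, a.e.). A standard boundary maximum principle for bounded-above harmonic functions then gives $\mathcal H\circ f\le P[(u\circ f)|_{\T}]$, the Poisson integral of the bounded function $(u\circ f)|_{\T}$, and evaluating at $0$ yields (ii).

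Granting (i) and (ii) the theorem follows at once. Since $g(\D)\subset f(\D)=\Omega$, the function $\mathcal H\circ g$ is harmonic on $\D$ and $u\circ g\le\mathcal H\circ g$ there; hence for every $r\in(0,1)$ the mean value property of $\mathcal H\circ g$ at the centre $0$ together with $g(0)=f(0)$ gives
\[
\frac1{2\pi}\int_0^{2\pi}u(g(re^{i\theta}))\,d\theta\le\frac1{2\pi}\int_0^{2\pi}\mathcal H(g(re^{i\theta}))\,d\theta=\mathcal H(g(0))=\mathcal H(f(0)),
\]
which is $\le\frac1{2\pi}\int_0^{2\pi}u(f(e^{i\theta}))\,d\theta$ by (ii).

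The step I expect to be the real obstacle is (ii): passing from the interior harmonic majorant $\mathcal H$ to the boundary integral of $u\circ f$. It is precisely here that both hypotheses are used in an essential way — properness, to guarantee $f(\T)=\partial\Omega$ so that the Dirichlet data of $\mathcal H$ are exactly the boundary values of $u\circ f$ (if $f(\T)$ met $\Omega$, the argument collapses), and the boundedness of $|u\circ f|$ on $\T$, to make $P[(u\circ f)|_{\T}]$ a genuine bounded harmonic function and to legitimize the monotone approximation $\phi_j\downarrow u|_{\partial\Omega}$. An alternative route, closer to the classical statement recalled in the introduction, is to use that a proper holomorphic image of $\D$ in $\C$ is simply connected, factor $f=\psi\circ B$ with $\psi$ a Riemann map of $\Omega$ and $B$ a finite Blaschke product fixing $0$, apply the classical subordination principle to the pair $(g,\psi)$, and recover $\int_0^{2\pi}u\circ f(e^{i\theta})\,d\theta=\int_0^{2\pi}u(\psi^{*}(e^{i\varphi}))\,d\varphi$ from the invariance of normalized Lebesgue measure on $\T$ under $B$; the boundary bookkeeping required there is essentially the same.
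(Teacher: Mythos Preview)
Your argument is correct but follows a genuinely different route from the paper's. Both proofs manufacture a superharmonic majorant of $u$ on $\Omega=f(\D)$ and then use the sub-mean-value inequality along $g$; they differ in how that majorant is built. The paper stays on the disk: it first replaces $u$ by a continuous $\psi\ge u$ with $\int_{\T}\psi\circ f$ within $\epsilon$ of $\int_{\T}u\circ f$, solves the classical Dirichlet problem on $\overline{\D}$ with boundary data $\psi\circ f$ to get a harmonic $v$, and then \emph{pushes $v$ forward} to $\Omega$ by setting $V(x)=\min\{v(t):t\in f^{-1}(x)\}$. Superharmonicity of $V$ is checked via local holomorphic sections of $f$ away from the critical-value set $f(\{f'=0\})$, and extended across that polar set by the removable-singularity theorem; the chain $u\le V$ on $\Omega$, $V\circ f\le v$ on $\D$, $v(0)=\tfrac{1}{2\pi}\int_{\T}\psi\circ f$ finishes the estimate. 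You instead work directly on $\Omega$: you take $\mathcal H$ to be the PWB solution with data $u|_{\partial\Omega}$ and \emph{pull it back}; the substantive step (your (ii)) is to bound $\mathcal H(f(0))$ by $\tfrac{1}{2\pi}\int_{\T}u\circ f$, which you obtain from boundary regularity of $\Omega$ at $f(e^{i\theta})$ for a.e.\ $\theta$, using Kellogg's theorem that irregular points are polar together with a Fatou/Littlewood radial-limit argument. The paper's approach is more elementary and self-contained --- only the Dirichlet problem on the disk, local invertibility of $f$, and removability across polar sets --- whereas yours is conceptually cleaner (no $\epsilon$-approximation, no fibrewise construction) at the cost of heavier potential theory (PWB solutions on general domains, Kellogg's lemma, the representation of bounded-above harmonic functions needed for your ``standard boundary maximum principle''). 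Your alternative Riemann-map/Blaschke factorisation is also viable and reduces matters to the classical subordination principle, but the paper does not take that path either.
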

\smallskip\noindent
Before proving this theorem we will recall  a  result concerning proper holomorphic functions.
\begin{proposition} 
 Let $f\in \O(\D,\mathbb{C})$ proper, $w\in \Omega=f(\D)$,  then the set $f^{-1}(w)$ is  finite and $\D\cap f'^{-1}(0)$ is countable.
\end{proposition}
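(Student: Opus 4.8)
The plan is to use the two elementary facts that, for a non-constant holomorphic function on a domain, both its level sets and the zero set of its derivative are discrete, combined with the defining property of a proper map that the preimage of a compact set is compact. So the first thing I would do is note that $f$ is non-constant: a constant map is not proper onto $\Omega=f(\D)$ unless $\Omega$ reduces to a single point, a degenerate case in which the statement is vacuous (and which is anyway excluded by the usual meaning of ``proper'', namely that the distance from $f(z)$ to $\partial\Omega$ tends to $0$ as $|z|\to 1$). Since $f$ is non-constant and holomorphic it is an open map, so $\Omega$ is a domain in $\C$.

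For the finiteness of $f^{-1}(w)$, fix $w\in\Omega$. The singleton $\{w\}$ is compact, so by properness $F:=f^{-1}(w)$ is a compact subset of $\D$. But $f-w$ is holomorphic and $\not\equiv 0$, so by the identity theorem $F$ has no accumulation point in $\D$; hence $F$ is discrete. A set that is at once discrete and compact is finite, which gives the first assertion. (In fact one can go further and show that $\#f^{-1}(w)$, counted with multiplicity, is a constant $n$ independent of $w\in\Omega$, so that $f$ is an $n$-sheeted branched covering of $\Omega$; but only the bare finiteness is needed in what follows.)

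For the countability of $\D\cap f'^{-1}(0)$, observe that $f'$ is holomorphic on $\D$ and, since $f$ is non-constant, $f'\not\equiv 0$; hence by the identity theorem $Z:=\{z\in\D:f'(z)=0\}$ is discrete and closed in $\D$. Exhausting $\D$ by the compact sets $\overline{D}_{1-1/n}$, $n\ge 1$, each intersection $Z\cap\overline{D}_{1-1/n}$ is a discrete compact set, hence finite, and therefore $Z=\bigcup_{n\ge 1}\bigl(Z\cap\overline{D}_{1-1/n}\bigr)$ is at most countable. The whole argument is routine; the only place that needs any thought is the opening step, where one must extract non-constancy of $f$ (equivalently, rule out a one-point target) from the hypothesis that $f$ is proper, everything afterwards being a direct application of the identity theorem together with the compactness of point-fibres.
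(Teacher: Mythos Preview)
Your argument is correct and is the standard one. Note, however, that the paper does not actually supply a proof of this proposition: it is introduced with the phrase ``we will recall a result concerning proper holomorphic functions'' and then used without justification in the proof of Theorem~1. So there is nothing to compare against; your write-up simply fills in what the paper takes for granted.

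One minor remark on the opening step: since the proposition is stated for $f\in\O(\D,\C)$ proper (i.e.\ proper as a map $\D\to\C$), the exclusion of constant maps is immediate---if $f\equiv c$ then $f^{-1}(\{c\})=\D$ is not compact---so you need not worry about a ``one-point target'' edge case at all.
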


\smallskip\noindent
Under the assumptions in Theorem 1 we will use Perron method for the Dirichlet Problem on  the closed unit disk $\overline{\mathbb{D}}$  to define a superharmonic function $V$ on the open set  $f(\D)$ which is greater than $u$ on $f(\D)$ and we use the properties of superharmonic functions to infer the inequality in  Theorem 1.

\begin{proof}[Proof of Theorem 1]
Let $\epsilon>0$, $g$ , $f$ and $u$ be as in Theorem 1.  As $u$ is upper semicontinuous, then  there is a sequence of continuous functions $(\psi_j)_j$ $\subset C(X)$ which decreases to $u$.  Set  $M=\max\{\psi_1\circ f(t), t\in \T\}.$ As $|u\circ f(t)|$ is bounded on $\T$, then the function $[0,2\pi] \ni\theta\rightarrow g(\theta)=\max\{|u\circ f(e^{i\theta})|, |M|\}$ is integrable on $[0,2\pi].$ Notice that $|\psi_j\circ f(e^{i\theta})|\leq g(\theta)$ for all $\theta\in [0, 2\pi]$ and $j>0$. Then by Lebesgue dominated convergence theorem there is $j_0$ such that:
$$\int_0^{2\pi}\psi_j\circ f(e^{i\theta})d\theta< \int_0^{2\pi}u\circ f(e^{i\theta})d\theta+\epsilon$$ for any $j>j_0$. We set $\psi= \psi_{j_0+1}.$ Then $\int_0^{2\pi}\psi \circ f(e^{i\theta})d\theta< \int_0^{2\pi}u \circ f(e^{i\theta})d\theta+\epsilon$.  We will define a superharmonic function $V$ on  $f(\D)$  such that $u\leq V$ on $f(\D)$.
Set $$v(t)=\sup\{w(t), w\in SH(\D),  w^*(t)\leq \psi \circ f (t), t\in \mathbb{T}\},$$ then, $v$ is harmonic on $\D$  and $v(t)=\psi \circ f(t)$ for $t\in \mathbb{T}$ see $[3, corol. 4.1.8]$.  Set $\Omega=f(\D)$ and define $v_1:\Omega\longrightarrow \mathbb{R}$ by 
 $$v_1(x)=\mbox{max} \{-v(t), t\in f^{-1}(x)\}$$ 
 where  $x\in \Omega$ see $[2]$. By Proposition 2,  $f^{-1}({x})$ is finite  so   $v_1$ makes sense. We will prove that $v_1$ is subharmonic on $\Omega$. Let $x\in\Omega\setminus f(\D\cap f'^{-1}(0))$,  then, by Local Inversion Theorem  there exist a number $k\in \mathbb{N}$,  a neighborhood $W$ of $x$ , disjoint neighbourhoods  
 $W_1$ ,..., $W_k$    of $ \{t_1,..., t_k\}=f^{-1}(x)$ and holomorphic functions $ g_i : W\rightarrow W_i $ such that $ f\circ g_i(y)=y  $ for all $ y\in W $. Then for all $y\in W$ we have 
 $$v_1(y)=\mbox{max} \{-v\circ g_i(y),  i=1,.., k\}.$$
 Hence $v_1$ is subharmonic on $W$, this for  all $x\in\Omega\setminus f( \D\cap f'^{-1}(0)).$ As subharmonicity is  a local property then $v_1$ is subharmonic on $\Omega\setminus f(\D\cap f'^{-1}(0))$. 
As $v_1$ is bounded and $ f(\D\cap f'^{-1}(0))$  is polar see $[3, corol. 3.2.5]$, then by Removable Singularity Theorem, $v_1$ can be extended to a  subharmonic function on $\Omega$.  One can find a similar work in   $[4, page 73]$.

\smallskip\noindent
We define a superharmonic function $V$ on $\Omega$ on setting  $V=-v_1.$ Notice that 
 $$V(x)=\mbox{min} \{v(t), t\in f^{-1}(x)\} \mbox{ and that } u\leq V \mbox{ on } \Omega.$$

\smallskip\noindent
We  also have  $ u\circ f(t)\leq V\circ f(t)\leq v(t)$ for any $t\in \mathbb{D}$. As  $V$ is superharmonic, $v$ is harmonic and $g(\D)\subset\Omega$, then  we have
 $$ \frac{1}{2\pi}\int_0^{2\pi} u\circ g(re^{i\theta})d\theta \leq   \frac{1}{2\pi}\int_0^{2\pi}V\circ g(re^{i\theta})d\theta\leq V\circ g(0)\leq v(0) =\frac{1}{2\pi}\int_0^{2\pi}\psi \circ f(e^{i\theta})d\theta .$$
Hence by the choice of $\psi$ we get 
$$
\int_0^{2\pi} u\circ g(re^{i\theta})d\theta \leq  \int_0^{2\pi}\psi \circ f(e^{i\theta})d\theta < \int_0^{2\pi}u\circ f(e^{i\theta})d\theta+\epsilon.$$
This for any $\epsilon >0$ hence $\int_0^{2\pi}u\circ g(re^{i\theta})d\theta\leq \int_0^{2\pi}u\circ f(e^{i\theta})d\theta.$
\end{proof} 

\smallskip\noindent
Assume that $f$ is proper then our Theorem 1 generalizes Littlewood's Subordination theorem to the situations  where $g$ is not subordinate to $f$. So on taking $u=|.|^p$, $\infty>p>0$ we get $$\int_0^{2\pi}|g(e^{i\theta})|^p d\theta\leq \int_0^{2\pi}|f(e^{i\theta})|^p d\theta.$$
One has the following interesting consequence. If two proper holomorphic functions $g$ and $f$ are such that $g(0)=f(0)$ and $g(\D)=f(\D)$,  then they have the same hardy norm. From now we say that $g\in\O(\D,\C)$ is subordinate to $f\in\O(\D,\C)$ if $g(0)=f(0)$ and $g(\D)\subset f(\D).$The following theorem gives a relation between the zero sets of subordinate and  proper superordinate functions.

\begin{theorem}
Let   $f \in \O(\D,\C)\cap C(\overline{\D},\C)$ proper. Then for all $g\in\O(\D,\C)$  such that $g(\D)\subset f(\D)$ and $g(0)=f(0)$  we have
$$ \sum_{b\in\D\cap g^{-1}(0)} m_b\log |b| \geq   \sum_{a\in f^{-1}(0)} m_a\log |a| ,$$
moreover if $g$ is proper and $g(\D)=f(\D)$ then we get 
$$  \sum_{b\in  g^{-1}(0)}m_b \log |b| =   \sum_{a\in f^{-1}(0)} m_a\log |a|.$$
With the convention  $ \sum_{a\in f^{-1}(0)}m_a \log |a|=0$ if $f$ doesn't vanish.
\end{theorem}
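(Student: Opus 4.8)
The plan is to apply Theorem 1 to a carefully chosen subharmonic function built from the zero sets. Observe that both $f$ and $g$ have only finitely many zeros in $\D$: for $f$ this follows from Proposition 2 applied with $w=0$, and for $g$ one argues that since $g(\D)\subset f(\D)=\Omega$, the value $0=f(0)=g(0)\in\Omega$, and $g$ is not identically zero (as $f$ is proper, $0\in\partial\Omega$ is impossible... in fact we only need that $g\not\equiv 0$ unless $f\equiv 0$, in which case the statement is the trivial convention). Actually the zero set $g^{-1}(0)$ in $\D$ need not be finite a priori, but the sum $\sum m_b\log|b|$ is still well-defined in $[-\infty,0]$ by the Blaschke condition for the bounded-in-modulus... here $g$ maps into $\Omega$ which may be unbounded, so I would instead note that the left-hand sum is a sum of nonpositive terms and the inequality is vacuous if it diverges to $-\infty$; so assume it is finite.

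The key step is the following: let $a_1,\dots,a_n$ (with multiplicity) be the zeros of $f$ in $\D$ and consider the subharmonic function on $X=\C$ (or on a neighborhood of $\overline{\Omega}$)
$$u(w) = \log|w| - \sum_{j=1}^n \log\left|\frac{w - f(0)\cdot(\text{something})}{\cdots}\right|$$
— this is the wrong normalization since $f$ need not be a polynomial. The correct object is $u = \log|w|$ itself: it is subharmonic on $\C$, and $|u\circ f|$ need not be bounded on $\T$, but $u\circ f(e^{i\theta})=\log|f(e^{i\theta})|$ is integrable (this is where properness and continuity on $\overline\D$ help: $f$ has finitely many zeros on $\overline\D$, each contributing an integrable logarithmic singularity). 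Applying Theorem 1 (or its proof, which only needs integrability of $u\circ f$ on $\T$, not boundedness) gives $\int_0^{2\pi}\log|g(re^{i\theta})|\,d\theta \le \int_0^{2\pi}\log|f(e^{i\theta})|\,d\theta$. Then I would let $r\to1$ and invoke Jensen's formula on both sides: for $f$,
$$\frac{1}{2\pi}\int_0^{2\pi}\log|f(e^{i\theta})|\,d\theta = \log|c_f| + \sum_{a\in f^{-1}(0)} m_a(-\log|a|),$$
where $c_f$ is the leading coefficient in the expansion $f(z)=c_f z^{k}+\cdots$ at the lowest-order zero at $0$ — but $f(0)=g(0)$ need not be $0$, so when $f(0)\neq 0$ Jensen reads $\frac{1}{2\pi}\int\log|f(e^{i\theta})| = \log|f(0)| + \sum_{a}m_a\log(1/|a|)$, and similarly for $g$ with $\log|g(0)|=\log|f(0)|$. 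Subtracting, the $\log|f(0)|$ terms cancel and we obtain exactly $\sum_b m_b\log|b| \ge \sum_a m_a\log|a|$.

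The main obstacle I anticipate is twofold. First, justifying that Theorem 1 applies with the unbounded subharmonic function $u=\log|\cdot|$: the hypothesis there is that $|u\circ f|$ be bounded on $\T$, which fails precisely at the zeros of $f$ on the circle. I would handle this by re-running the Perron construction in the proof of Theorem 1 with $\psi_j \downarrow u$ a decreasing sequence of continuous functions and noting that the only place boundedness was used was to produce an integrable dominating function $g(\theta)$; here $\log|f(e^{i\theta})|$ is itself integrable by the finiteness of the zero set of the continuous function $f$ on $\overline\D$ together with a standard estimate near each boundary zero, so dominated convergence still applies. Second, the passage $r\to1$ on the left side: since $\log|g(re^{i\theta})|$ may not converge nicely, I would instead use that $r\mapsto \frac{1}{2\pi}\int_0^{2\pi}\log|g(re^{i\theta})|\,d\theta$ is nondecreasing (subharmonicity of $\log|g|$) and equals $\log|g(0)| + \sum_{|b|<r} m_b\log(r/|b|)$ by Jensen for each $r<1$; taking $r\to1$ gives the series $\sum_{b\in\D} m_b\log(1/|b|)$ (monotone convergence), which may be $+\infty$ exactly when the left-hand sum in the statement is $-\infty$, consistent with the claimed inequality. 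For the "moreover" part, applying the inequality in both directions (swapping the roles of $f$ and $g$, both now proper with equal image and equal value at $0$) forces equality, and properness of $g$ guarantees $g^{-1}(0)$ is finite so the sum is genuine.
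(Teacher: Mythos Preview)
Your approach is essentially the paper's: apply Theorem 1 with $u=\log|\cdot|$, invoke Jensen's formula for $f$ on $\overline\D$ and for $g$ on $\overline{D_r}$, cancel $\log|f(0)|=\log|g(0)|$, and let $r\to1$; the ``moreover'' clause follows by symmetry. Your handling of obstacle~2 (monotone convergence of $\sum_{|b|<r}m_b\log(r/|b|)$ as $r\to1$) is fine and matches the paper's implicit $r\to1$ step.

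Where you make life harder than necessary is obstacle~1. You propose to rerun the Perron construction with only integrability of $\log|f(e^{i\theta})|$; this can be made to work, but the paper sidesteps it entirely with a short case split. First, if $f(0)=g(0)=0$ then both sums are $-\infty$ and there is nothing to prove. Second, if $f$ has a zero on $\T$, then by properness and continuity $0\in f(\T)\subset\partial\Omega$, hence $0\notin\Omega=f(\D)\supset g(\D)$; so neither $f$ nor $g$ has zeros in $\D$ and both sums are $0$. In the remaining case $f(0)\neq0$ and $f$ is zero-free on $\T$, so $\log|f|$ is continuous, hence bounded, on $\T$ and Theorem~1 applies verbatim. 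This trichotomy is cleaner than arguing integrability near boundary zeros and avoids any need to revisit the proof of Theorem~1.
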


\begin{proof}  If $g(0)=f(0)=0$. Then we get $ -\infty= \sum_{b\in \D\cap g^{-1}(0)} m_b\log |b|= \sum_{a\in f^{-1}(0)} m_a \log|a|.$  
If $\T\cap f^{-1}(0)$ is not empty,  then all the zeros are on $\T$ hence
$$0=\sum_{b\in \D\cap g^{-1}(0)} m_b\log |b|= \sum_{a\in f^{-1}(0)} m_a\log |a| .$$
Assume that $f(0)=g(0)\neq 0$ and $\T\cap f^{-1}(0)$ is empty, then by Jesen's formula we have
$$\log |f(0)|+ \sum_{a\in f^{-1}(0)} m_a\log \frac{1}{|a|}= \int_0^{2\pi}\log | f(e^{i\theta})|d\theta$$
$$\log |g(0)|+ \sum_{b\in D_r\cap g^{-1}(0)} m_b\log \frac{r}{|b|}= \int_0^{2\pi}\log | g(re^{i\theta})|d\theta, \mbox{  for } 0\leq r<1.$$
By Theorem 1 we get 
$$\log |g(0)|+ \sum_{b\in D_r\cap g^{-1}(0)} m_b\log \frac{r}{|b|}\leq   \log |f(0)|+ \sum_{a\in f^{-1}(0)} m_a\log \frac{1}{|a|} . $$
As $g(0)=f(0)\neq 0$ then for all $r\in ]0,1[$ we have
$$ \sum_{b\in D_r\cap g^{-1}(0)} m_b\log \frac{r}{|b|}\leq  \sum_{a\in f^{-1}(0)} m_a \log \frac{1}{|a|} $$
hence 
$$ \sum_{b\in \D\cap g^{-1}(0)} m_b\log \frac{1}{|b|}\leq  \sum_{a\in f^{-1}(0)} m_a\log \frac{1}{|a|} .$$
\end{proof}

\smallskip\noindent
It is well known for a continuous function $f:\D\rightarrow \D$ that, if  $f(0)\neq 0$ then there is $r>0$ depending on $f$ such that $f(z)\neq 0$ for $z\in D(0,r)$. Here we will prove that for holomorphic functions  the choice of $r$  may not depend of $f$.

\begin{corollary}
Let $g\in \O(\D,\D)$ with $g(0)\neq 0$ then, $g$ doesn't vanish in  $D(0, |g(0)|)$.
\end{corollary}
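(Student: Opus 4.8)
The plan is to deduce the corollary from Theorem 3 by choosing the superordinate function to be the degree‑one Blaschke factor that maps $\D$ onto itself and takes the value $g(0)$ at the origin. Concretely, set $a=g(0)$; since $g\in\O(\D,\D)$ we have $a\in\D$, and by hypothesis $a\neq 0$. Let
$$f(z)=\frac{z+a}{1+\bar a z}.$$
This $f$ is an automorphism of $\D$; it is holomorphic on a disc strictly larger than $\overline{\D}$ (the pole $-1/\bar a$ has modulus $1/|a|>1$), hence $f\in\O(\D,\C)\cap C(\overline{\D},\C)$; it is proper onto $f(\D)=\D$; and it satisfies $f(0)=a=g(0)$ together with $g(\D)\subset\D=f(\D)$. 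So the hypotheses of Theorem 3 are met by the pair $(f,g)$.

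Next I would record that $f$ vanishes only at $z=-a$, with multiplicity one, so that $\sum_{a'\in f^{-1}(0)}m_{a'}\log|a'|=\log|a|=\log|g(0)|$. Inserting this into the inequality of Theorem 3 yields
$$\sum_{b\in\D\cap g^{-1}(0)}m_b\log|b|\;\geq\;\log|g(0)|.$$

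Then I would argue by contradiction. Suppose $g$ had a zero $b_0$ with $0<|b_0|<|g(0)|$ (note $b_0\neq 0$ because $g(0)\neq 0$, so $\log|b_0|$ is finite). Every summand $m_b\log|b|$ above is $\leq 0$, since $|b|<1$; in particular $m_{b_0}\log|b_0|\leq\log|b_0|$ because $m_{b_0}\geq 1$ and $\log|b_0|<0$. Discarding the remaining non‑positive terms we get
$$\sum_{b\in\D\cap g^{-1}(0)}m_b\log|b|\;\leq\;\log|b_0|\;<\;\log|g(0)|,$$
contradicting the previous display. Hence $g$ has no zero in $D(0,|g(0)|)$.

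The argument is short, and the genuinely substantive input is Theorem 3 itself; given that, there is no real obstacle. The only points that need care are the verification that the Möbius factor $f$ actually satisfies every hypothesis of Theorem 3 — properness onto $\D$ and continuity up to $\T$, both standard for a finite Blaschke product — and the sign bookkeeping in the final step, where one uses that the individual summands are non‑positive so that even a single "too small" zero forces the whole sum strictly below the threshold $\log|g(0)|$.
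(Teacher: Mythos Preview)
Your proof is correct and follows essentially the same route as the paper: both choose the superordinate function to be the M\"obius map $f(z)=\dfrac{z+a}{1+\bar a z}$ with $a=g(0)$, observe that its unique zero is $-a$, apply Theorem~3 to obtain $\sum_{b\in\D\cap g^{-1}(0)}m_b\log|b|\geq\log|g(0)|$, and then derive a contradiction from a hypothetical zero $b_0$ with $|b_0|<|g(0)|$ by using that all summands are non-positive. Your write-up is in fact a bit more explicit than the paper's in checking the hypotheses of Theorem~3 and in the sign bookkeeping, but the argument is the same.
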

\begin{proof}
Set $a_0=g(0)$ and $f(z)=\frac{z+a_0}{1+\bar{a}_0 z}$ then $f$ is proper and has a simple zero at $-a_0$. Notice that  $g$ is subordinate to $f$. Now assume that there is $b_0\in D(0,|g(0)|)$ such that $g(b_0)=0,$ then 
 $$         \sum_{a\in f^{-1}(0)} m_a\log |a|\geq
 \log|-a_0|>\log|b_0|\geq \sum_{b\in\D\cap g^{-1}(0)} m_b\log |b|.$$
This is in contradiction with the theorem above.
\end{proof}

\smallskip\noindent
For $x\in\D^*$ we denote by $\O(\D,\D,x)$ the set of all holomorphic functions $f\in\O(\D,\D)$ with $f(0)=x$, then $D(0,|x|)$ is the largest disk  in which none  element of $\O(\D,\D,x)$ vanishes. Notice that Corollary 4 can be seen as a consequence of Schwarz Lemma and Corollary 6 a generalization of Corollary 4.

\begin{corollary}
Let $X\subset\C$ be open and $f \in \O(\D,X)\cap C(\overline{\D},X)$ be proper. Then for all $g\in\O(\D,X)$  such that $g(\D)\subset f(\D)$ and $g(0)=f(0)$  we have
$$ \sum_{b\in g^{-1}(p)} m_b\log |b| \geq   \sum_{a\in f^{-1}(p)} m_a\log |a| , \mbox{   for   } p\in X.$$
With the convention  $ \sum_{a\in f^{-1}(p)}m_a \log |a|=0$ if the function $f-p$ doesn't vanish.
\end{corollary}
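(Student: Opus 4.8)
The plan is to reduce Corollary 6 to Theorem 3 by a translation. Fix $p\in X$ and consider the functions $\tilde f=f-p$ and $\tilde g=g-p$. Since translation by $-p$ is an automorphism of $\C$, the map $\tilde f$ is still proper as a map $\D\to\tilde f(\D)=f(\D)-p$, still lies in $\O(\D,\C)\cap C(\overline\D,\C)$, and $\tilde g\in\O(\D,\C)$ satisfies $\tilde g(\D)=g(\D)-p\subset f(\D)-p=\tilde f(\D)$ together with $\tilde g(0)=g(0)-p=f(0)-p=\tilde f(0)$. Thus the hypotheses of Theorem 3 are met for the pair $(\tilde f,\tilde g)$.

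Next I would simply apply Theorem 3 to $(\tilde f,\tilde g)$. This yields
$$\sum_{b\in\D\cap\tilde g^{-1}(0)} m_b\log|b|\;\ge\;\sum_{a\in\tilde f^{-1}(0)} m_a\log|a|.$$
Now observe that $\tilde g^{-1}(0)=(g-p)^{-1}(0)=g^{-1}(p)$ with the same multiplicities (the order of vanishing of $g-p$ at $b$ equals the order to which $g$ attains the value $p$ at $b$), and likewise $\tilde f^{-1}(0)=f^{-1}(p)$ with the same multiplicities. Substituting these identifications gives exactly
$$\sum_{b\in g^{-1}(p)} m_b\log|b|\;\ge\;\sum_{a\in f^{-1}(p)} m_a\log|a|,$$
and the stated convention (both sides are $0$, resp. the right side is $0$, in the degenerate cases) is inherited directly from the corresponding convention in Theorem 3 applied to $\tilde f$. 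One should also note that $\D\cap\tilde g^{-1}(0)$ is just written $g^{-1}(p)$ in the statement because $g^{-1}(p)\subset\D$ already, so no boundary zeros are being discarded.

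There is essentially no obstacle here; the only point requiring a word of care is verifying that $\tilde f=f-p$ is genuinely proper onto its image. Properness of $f:\D\to f(\D)$ means $f^{-1}(K)$ is compact in $\D$ for every compact $K\subset f(\D)$; since $\tilde f^{-1}(K')=f^{-1}(K'+p)$ and $K\mapsto K+p$ is a homeomorphism between compact subsets of $\tilde f(\D)$ and of $f(\D)$, properness transfers verbatim. Equivalently, one may appeal to the fact that translations are biholomorphic automorphisms of $\C$ and composing a proper map with an automorphism of the target preserves properness. With that remark in place, Corollary 6 is an immediate specialization of Theorem 3.
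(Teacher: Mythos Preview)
Your argument is correct. The paper states this corollary without proof, and your reduction to Theorem~3 via the translation $f\mapsto f-p$, $g\mapsto g-p$ is exactly the intended route: the zero sets of $\tilde f,\tilde g$ become the $p$-level sets of $f,g$ with identical multiplicities, while properness, the inclusion $g(\D)\subset f(\D)$, and the condition $g(0)=f(0)$ are all preserved under translation.
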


\smallskip\noindent
The corollary  above gives an idea about the location  of the solutions of certain  equations. For instance we have.
\begin{corollary}
Let $g\in \O(\D,\D,x)$ with $x\in\D^{*}$ then, the function  $g-a$ doesn't vanish in  the disk $D(0,\frac{|a-x|}{|1-\bar{x} a|})$ for  all $a\in\D\setminus \{x\}$.
\end{corollary}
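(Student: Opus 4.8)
The plan is to deduce this exactly as Corollary 4 was deduced from Theorem 3, but now invoking Corollary 6: I take the superordinate function $f$ to be the automorphism of $\D$ that sends $0$ to $x$, compute its fibre over $a$, and then run the same contradiction argument with the zero of $g-a$.

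First I would set $f(z)=\dfrac{z+x}{1+\bar x z}$. Since $x\in\D^{*}$, the pole $-1/\bar x$ lies outside $\overline{\D}$, so $f$ is holomorphic on a neighbourhood of $\overline{\D}$; in particular $f\in\O(\D,\D)\cap C(\overline{\D},\C)$. Moreover $f$ is an automorphism of $\D$, so $f(\D)=\D$ and $f\colon\D\to f(\D)$ is proper. Hence $f$ satisfies the hypotheses of Corollary 6 with $X=\D$, and $g$ is subordinate to $f$ in the paper's sense, because $g(0)=x=f(0)$ and $g(\D)\subset\D=f(\D)$.

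Next, fix $a\in\D\setminus\{x\}$ and determine $f^{-1}(a)$: solving $\frac{z+x}{1+\bar x z}=a$ gives the single point $z=\frac{a-x}{1-\bar x a}$, and since $f$ is injective this is a simple zero of $f-a$, so $\sum_{c\in f^{-1}(a)}m_c\log|c|=\log\frac{|a-x|}{|1-\bar x a|}$. Applying Corollary 6 with $p=a$ then yields
$$\sum_{b\in g^{-1}(a)}m_b\log|b|\ \geq\ \log\frac{|a-x|}{|1-\bar x a|}.$$
Now argue by contradiction: suppose $g(b_0)=a$ for some $b_0$ with $|b_0|<\frac{|a-x|}{|1-\bar x a|}$. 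Since $g(0)=x\neq a$ we have $b_0\neq 0$, hence $\log|b_0|<0$, and likewise $\log|b|<0$ for every $b\in\D\cap g^{-1}(a)$. Thus the left-hand side, a sum of non-positive terms one of which is $m_{b_0}\log|b_0|\leq\log|b_0|$, satisfies $\sum_{b\in g^{-1}(a)}m_b\log|b|\leq\log|b_0|<\log\frac{|a-x|}{|1-\bar x a|}$, contradicting the displayed inequality. Therefore $g-a$ has no zero in $D\bigl(0,\tfrac{|a-x|}{|1-\bar x a|}\bigr)$.

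I do not expect a genuine obstacle here; the only points needing a word of care are verifying that the chosen $f$ meets the hypotheses of Corollary 6 — proper onto its image $\D$ and continuous on $\overline{\D}$ — that $a=p$ indeed lies in $X=\D$, and the small but essential observation that $b_0\neq 0$, which is what forces $\log|b_0|<0$ and drives the contradiction.
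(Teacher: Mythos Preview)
Your argument is correct and mirrors the paper's intended derivation: Corollary~6 is stated there without proof, as an instance of Corollary~5 obtained exactly by taking $f$ to be the disc automorphism sending $0$ to $x$ and arguing by contradiction as in the proof of Corollary~4. One slip to fix: you repeatedly invoke ``Corollary~6'' where you mean Corollary~5 (the general-$p$ inequality $\sum_{b\in g^{-1}(p)}m_b\log|b|\geq\sum_{a\in f^{-1}(p)}m_a\log|a|$); as written the citation is literally circular, so correct the label.
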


\subsection{ Lelong functional in $\C$}
Consider $\alpha(x)=\sum_{j=1}^{N}m_j\chi_{\{p_j\}}(x)$, where $p_j$ are points in $X$ and $m_j$ are positive, then we define a disc functional
$$ H_{\alpha}^L:\O(\D,X)\rightarrow\R\cup\{-\infty\}, \mbox{           } H^L_{\alpha}(f)=\sum_{a\in \D}\alpha(f(a))m_a\log|a|,$$
which is called the Lelong functional with respect to $\alpha.$  Its envelope  is the following   $$ EH^L_{\alpha}(x)=\inf\{H_{\alpha}^L(f), f\in\O(\D,X), f(0)=x\}.$$
In $[5]$ it is proven that $EH_{\alpha}^L$ is subharmonic and it  coincides with the Green function of $X$ with several poles at $p_1,\dots,p_N$ of weights $m_1\dots, m_N$.   The theorem below states that  the $infimum$ in the definition of $EH^L_{\alpha}$ is  actually a $minimum$ in a special case. 

\begin{theorem} Let $X$ be open in $\C$. If there is $F\in \O(\D,\C)$ proper such that $F(\D)=X,$ then  for all $x\in X$ there is $f\in\O(\D,X)$ with $f(0)=x$  such that 
$$EH_{\alpha}^{L}(x)= H_{\alpha}^L(f)=\sum_{z\in\D} \alpha( f(z)) m_z\log|z|.$$
\end{theorem}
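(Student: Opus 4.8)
The plan is to exhibit the extremal disc explicitly as a reparametrisation of $F$. First, we may assume $x\notin\{p_1,\dots,p_N\}$, for otherwise $EH^L_\alpha(x)=-\infty$, attained by any admissible disc. Since $F$ is proper, Proposition 2 says $F^{-1}(x)$ is finite and nonempty, so I would pick $w_0\in F^{-1}(x)$, let $\psi$ be the automorphism of $\D$ with $\psi(0)=w_0$, and set $f:=F\circ\psi$. Then $f\in\O(\D,X)$, $f(0)=x$, $f(\D)=F(\D)=X$, and $f$ is proper; being proper it is a branched covering of some finite degree $d$, so every fibre of $f$ has total multiplicity $d$. As $f$ is admissible in the infimum defining $EH^L_\alpha(x)$, and since $H^L_\alpha(h)=\sum_j m_j\sum_{b\in h^{-1}(p_j)}m_b\log|b|$ for every disc $h$ while all $m_j>0$, the theorem reduces to two claims, in which $g_X$ denotes the Green function of $X$ (a Green domain, as the argument will show), $p$ ranges over $X$, and $g$ over $\O(\D,X)$ with $g(0)=x$:
$$\text{(a)}\quad\sum_{a\in f^{-1}(p)}m_a\log|a|=g_X(p,x),\qquad\text{(b)}\quad\sum_{b\in\D\cap g^{-1}(p)}m_b\log|b|\ \ge\ g_X(x,p).$$
By symmetry of the Green function, $g_X(p,x)=g_X(x,p)$, claims (a) and (b) give $\sum_{b\in g^{-1}(p_j)}m_b\log|b|\ge\sum_{a\in f^{-1}(p_j)}m_a\log|a|$ for each $j$; multiplying by $m_j$ and summing over $j$ yields $H^L_\alpha(g)\ge H^L_\alpha(f)$ for every competitor $g$, hence $EH^L_\alpha(x)=H^L_\alpha(f)$, as asserted.

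For claim (a) I would analyse $\Lambda(w):=\sum_{a\in f^{-1}(w)}m_a\log|a|$ on $X$ — a finite sum by Proposition 2 — and prove $\Lambda=g_X(\cdot,x)$. Away from the set of critical values (the $f$-image of $\{z\in\D:f'(z)=0\}$), where $f$ is an unbranched $d$-sheeted covering, one has $\Lambda=\sum_{i=1}^d\log|f_i^{-1}|$ for the local inverse branches $f_i^{-1}$ of $f$ — precisely the computation carried out in the proof of Theorem 1 — so $\Lambda$ is subharmonic there, indeed harmonic off $x$, since a branch $f_i^{-1}$ vanishes only at $x=f(0)$. The critical values form a countable (Proposition 2), hence polar, set, and $\Lambda\le0$, so by the Removable Singularity Theorem $\Lambda$ extends to a function subharmonic on $X$, harmonic on $X\setminus\{x\}$, with a unit logarithmic pole at $x$ coming from the branch through $0$. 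The step that uses properness is the boundary behaviour: if $K\subset\D$ is the closed disc of radius $1-\epsilon$ then $f(K)$ is compact in $X$, and for $w\notin f(K)$ one has $f^{-1}(w)\subset\{1-\epsilon<|z|<1\}$, so $\Lambda(w)\ge d\log(1-\epsilon)$; letting $\epsilon\to0$ forces $\Lambda\to0$ at $\partial X$. A negative harmonic function on $X\setminus\{x\}$ with a unit logarithmic pole at $x$ and vanishing boundary values is $g_X(\cdot,x)$, which is claim (a). (For $X=\D$ and $F$ a finite Blaschke product this is the classical $\sum_{c\in B^{-1}(w)}m_c\,g_\D(c,z)=g_\D(B(z),w)$.)

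For claim (b) I would pull the Green function back by $g$: put $u:=g_X(\cdot,p)\circ g$, a nonpositive subharmonic function on $\D$ with $u(z)=m_b\log|z-b|+O(1)$ near each $b\in\D\cap g^{-1}(p)$. Locally $u$ is the real part of a holomorphic function, and since its logarithmic singularities have positive integer coefficients $m_b$, the harmonic conjugate has periods in $2\pi i\,\mathbb{Z}$; hence $u=\log|\phi|$ for a holomorphic $\phi$ on $\D$ with $\|\phi\|_\infty\le1$, $|\phi(0)|=e^{g_X(x,p)}>0$, whose zero set (with multiplicities) is $\D\cap g^{-1}(p)$. Jensen's formula on the discs $D_r$, together with $\log|\phi|\le0$ and the monotone limit $r\to1$, gives $g_X(x,p)=\log|\phi(0)|\le\sum_{b\in\D\cap g^{-1}(p)}m_b\log|b|$, which is claim (b). This agrees with $[5]$, where $EH^L_\alpha(x)$ is the value at $x$ of the Green function of $X$ with poles $p_j$ of weights $m_j$, which in one variable is $\sum_j m_j g_X(\cdot,p_j)$.

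I expect the main obstacle to be the boundary statement $\Lambda\to0$ on $\partial X$ in the second paragraph. Properness of $f$ is indispensable there — it cannot be replaced by the assumption that $F$ extends continuously to $\overline{\D}$, which may fail (a Riemann map onto a domain whose boundary is not locally connected is proper yet has no continuous extension) — and the argument relies on the uniformity provided by all fibres of $f$ having the same total multiplicity $d$; when $X$ is unbounded one must also treat $\infty$ as a boundary point of $X$ in $\widehat{\C}$. By contrast, the subharmonicity and removable-singularity assertions are a direct reprise of the argument in the proof of Theorem 1, and the Jensen estimate of the third paragraph is routine (it also shows, a posteriori, that $\D\cap g^{-1}(p)$ satisfies the Blaschke condition when $x\ne p$). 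Finally, if one does add the hypothesis $F\in C(\overline{\D},\C)$, then $f=F\circ\psi$ inherits it and $H^L_\alpha(g)\ge H^L_\alpha(f)$ follows at once from Corollary 5, applied with superordinate function $f$ at each pole $p_j$ and summed; the work above is exactly what removes that extra hypothesis.
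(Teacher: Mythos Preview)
Your argument is correct, and it is genuinely different from the paper's. The paper's proof is two lines: choose $\psi$ with $F\circ\psi(0)=x$, set $f=F\circ\psi$, and invoke Corollary~5 at each pole $p_j$ to get $\sum_{b\in g^{-1}(p_j)}m_b\log|b|\ge\sum_{a\in f^{-1}(p_j)}m_a\log|a|$; multiplying by $m_j$ and summing gives $H^L_\alpha(g)\ge H^L_\alpha(f)$. You recognise this route in your final paragraph. What you do differently is bypass Theorem~1 and Corollary~5 entirely, instead proving the pointwise identity $\sum_{a\in f^{-1}(p)}m_a\log|a|=g_X(p,x)$ via the push-forward of $\log|z|$ under the proper map $f$, and the Lindel\"of-type lower bound $\sum_{b\in g^{-1}(p)}m_b\log|b|\ge g_X(x,p)$ via Jensen applied to $g_X(\cdot,p)\circ g$. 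This buys you two things: first, an explicit identification of the extremal value with the Green function (which the paper only cites from~[5]); second, and more importantly, you avoid the continuity hypothesis $F\in C(\overline{\D},\C)$ that Corollary~5 carries but Theorem~7 does not assume---your boundary analysis of $\Lambda$ uses only properness. The price is length: the paper's proof is a one-line reduction to earlier work, while yours redoes the potential theory from scratch.
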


\begin{proof}
Let $x\in X$,  $\psi$ be an automorphism of $\D$ such that $F\circ\psi(0)=x$ and set $f=F\circ\psi.$ For all $g\in\O(\D,X)$ with $g(0)=x$ we have 
 $$H_{\alpha}^L(f)=  \sum_{a\in \D}\alpha(f(a)) m_a\log|a|=\sum_{p\in X}\alpha(p)\sum_{a\in f^{-1}(p)} m_a\log|a|.$$
$$H_{\alpha}^L(g)=  \sum_{b\in \D}\alpha(g(b))m_b\log|b|=\sum_{p\in X}\alpha(p)\sum_{b\in g^{-1}(p)} m_b\log|b|.$$
Then by Corollary 5 we get $ H_{\alpha}^L(f) \leq H_{\alpha}^L(g)$  hence
 $$ EH^L_{\alpha}(x)\leq     H_{\alpha}^L(f)  \leq \inf\{H_{\alpha}^L(g), g\in\O(\D,X), g(0)=x\}   \leq EH^L_{\alpha}(x).   $$
\end{proof}

\smallskip\noindent
Let $z_x\in F^{-1}(x)$ and set $\psi_x(z)=\frac{z+z_x}{1+\bar{z}_xz}$ on taking $\psi_x(z)=a$ we get 
$$EH^L_{\alpha}(x)=    \sum_{a\in\D} \alpha( F(a)) m_a \log \left|\frac{a-z_x}{1-\bar{z}_x a}\right|.$$
Remark that the value of $EH^L_{\alpha}(x)$ doesn't depend on the choice of $z_x\in F^{-1}(x)$, $x\in X.$

\subsubsection{ Lelong functional in $\C^n$} Let $X\subset\C^n$  be open, consider $\alpha(x)=m\chi_{\{p\}}(x)$, where $p$ is a point in $X$ and $m$ is nonnegative.
\begin{theorem}
Assume that $X=X_1\times\dots\times X_n$ and $X_i\subset\C$, $i=1,...,n$ are jordan domains, then
at each point $x\in X$ there is an extremal disc for $EH^L_{\alpha}$ in other word there is  $f\in\O(\D,X)$ with $f(0)=x$ such that $$ EH^L_{\alpha}(x)=H^L_{\alpha}(f)=    \sum_{a\in \D}\alpha(f(a)) m_a\log|a|.    $$
\end{theorem}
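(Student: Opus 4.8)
The plan is to reduce the statement to the case of the unit polydisc $\D^{n}$ and then write the extremal disc down explicitly. Since each $X_{i}$ is a Jordan domain, the Riemann mapping theorem provides a biholomorphism $\phi_{i}\colon\D\to X_{i}$; by Carath\'eodory's theorem it even extends to a homeomorphism $\overline{\D}\to\overline{X_{i}}$, which will render the disc we produce continuous on $\overline{\D}$. Put $\Phi=(\phi_{1},\dots,\phi_{n})\colon\D^{n}\to X$, a biholomorphism, and set $Q=\Phi^{-1}(p)$, $w=\Phi^{-1}(x)$. Because $D\Phi$ is invertible at every point, for $h\in\O(\D,\D^{n})$ and $a$ with $h(a)=Q$ the multiplicity of $a$ as a solution of $(\Phi\circ h)(z)=p$, namely $\min_{i}\operatorname{ord}_{a}\bigl((\Phi\circ h)_{i}-p_{i}\bigr)$, equals $\min_{i}\operatorname{ord}_{a}(h_{i}-Q_{i})$; hence $H^{L}_{\alpha}(\Phi\circ h)=H^{L}_{\beta}(h)$ with $\beta=m\chi_{\{Q\}}$. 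As $h\mapsto\Phi\circ h$ is a bijection of $\{h\in\O(\D,\D^{n}):h(0)=w\}$ onto $\{f\in\O(\D,X):f(0)=x\}$, it suffices to find an extremal disc for $EH^{L}_{\beta}$ at $w$ and compose it with $\Phi$. The same covariance remark, applied to the automorphism $T=(\tau_{1},\dots,\tau_{n})$ of $\D^{n}$ with $\tau_{i}(\zeta)=\frac{\zeta-Q_{i}}{1-\overline{Q_{i}}\,\zeta}$, moves the pole to $0$, so we are left with the single pole $0$ of weight $m$ at the base point $\tilde w:=T(w)$.

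Let $\rho:=\max_{i}|\tilde w_{i}|$. If $\rho=0$ then $x=p$, $EH^{L}_{\alpha}(x)=-\infty$, and any non-constant disc through $p$ based at $0$ is extremal; so assume $\rho>0$ and fix $i_{0}$ with $|\tilde w_{i_{0}}|=\rho$. First I would prove the sharp lower bound $H^{L}_{\beta}(h)\ge m\log\rho$ for every $h\in\O(\D,\D^{n})$ with $h(0)=\tilde w$. Indeed $h_{i_{0}}\colon\D\to\D$ is bounded by $1$ and not identically $0$ (its value at $0$ is $\tilde w_{i_{0}}\neq0$), so Jensen's formula on $D_{r}$ and letting $r\to1$ give $\sum_{b\in\D,\,h_{i_{0}}(b)=0}\operatorname{ord}_{b}(h_{i_{0}})\log\frac{1}{|b|}\le-\log|\tilde w_{i_{0}}|=\log\frac{1}{\rho}$. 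Every common zero $a$ of the coordinates of $h$ lies in $h_{i_{0}}^{-1}(0)$ and has $m_{a}=\min_{i}\operatorname{ord}_{a}(h_{i})\le\operatorname{ord}_{a}(h_{i_{0}})$, while all terms $\log\frac{1}{|b|}$ are nonnegative; hence $\sum_{a\in h^{-1}(0)}m_{a}\log\frac{1}{|a|}\le\log\frac{1}{\rho}$, i.e.\ $H^{L}_{\beta}(h)=m\sum_{a\in h^{-1}(0)}m_{a}\log|a|\ge m\log\rho$.

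Next I would exhibit a disc attaining the bound, namely
$$h^{*}(z)=\Bigl(-\frac{\tilde w_{1}}{\rho}\cdot\frac{z-\rho}{1-\rho z},\ \dots,\ -\frac{\tilde w_{n}}{\rho}\cdot\frac{z-\rho}{1-\rho z}\Bigr).$$
Each coordinate is holomorphic on a neighbourhood of $\overline{\D}$, sends $\D$ into $D\bigl(0,|\tilde w_{i}|/\rho\bigr)\subseteq\D$, and equals $\tilde w_{i}$ at $0$, so $h^{*}\in\O(\D,\D^{n})$ and $h^{*}(0)=\tilde w$. Its $i_{0}$-th coordinate is a unimodular multiple of the Blaschke factor $\frac{z-\rho}{1-\rho z}$, hence vanishes only at $z=\rho$ and simply; therefore the common zero set of $h^{*}$ is $\{\rho\}$ with multiplicity $1$, so $H^{L}_{\beta}(h^{*})=m\log\rho$. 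Thus $h^{*}$ is extremal for $EH^{L}_{\beta}$ at $\tilde w$, and undoing the two reductions, $f^{*}:=\Phi\circ T^{-1}\circ h^{*}$ lies in $\O(\D,X)$ (in fact in $C(\overline{\D},X)$), satisfies $f^{*}(0)=x$, and $H^{L}_{\alpha}(f^{*})=EH^{L}_{\alpha}(x)$.

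The main obstacle is the content of the last two paragraphs: recognising that in a product domain the extremality is governed by the single coordinate maximising the pseudo-hyperbolic distance $\rho$ to the pole, that this pins down both the value $m\log\rho$ and the shape of the extremal disc, and that forcing precisely that coordinate to be a Blaschke factor produces a unique simple common zero realising the lower bound. By contrast, the two reductions of the first paragraph are routine, since $H^{L}_{\alpha}$ sees only the zero structure of $f-p$ (with multiplicities) and transforms covariantly under biholomorphisms of the target.
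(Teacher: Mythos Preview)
Your proof is correct and follows the same overall scheme as the paper's: reduce to the polydisc via Riemann maps, single out the coordinate with maximal pseudo-hyperbolic distance to the pole, bound the functional from below by projecting to that coordinate, and exhibit an explicit disc attaining the bound.

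There are two genuine differences worth noting. First, for the lower bound the paper does not appeal to Jensen's formula directly; instead, after normalising so that the base point is the origin in each factor, it invokes Corollary~5 (hence ultimately Theorem~1) to compare $g_1$ with the proper map $f_1=F_1\circ\psi_1$ and obtain $\sum_{b\in g_1^{-1}(p_1)} m_b\log|b|\ge\sum_{a\in f_1^{-1}(p_1)} m_a\log|a|$. Your route via Jensen on $h_{i_0}$ is more elementary and self-contained, but it bypasses the subordination machinery that the paper is designed to illustrate. Second, the extremal discs differ: the paper normalises so that the base point is $0$ and the pole sits at $(z_1,\dots,z_n)$, and takes the \emph{linear} disc $z\mapsto\bigl(\tfrac{z_1}{a}z,\dots,\tfrac{z_n}{a}z\bigr)$ with $|a|=\max_i|z_i|$, whereas you normalise the pole to $0$ and use scaled Blaschke factors. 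The two are related by a polydisc automorphism, and both give $H^L_\alpha=m\log\rho$ with a single simple hit at the pole. In short: same architecture, but your argument is independent of Theorem~1 while the paper's proof leans on it.
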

\begin{proof}
Denote $F_i$ the Riemann mapping from $\D$ onto $X_i$, for $x\in X$ we consider $\psi_i$ an automorphism of $\D$ such that $F_i\circ\psi_i(0)=x_i$. Take $z_i\in\D$ such that $F_i\circ\psi_i(z_i)=p_i$. Take $a\in\{z_1,\dots, z_n\}$ such that $|a|=\max\{|z_i|,  i=1,\dots, n\}$. We obtain  an analytic disc $f$ in $X$ centered at $x$ containing $p$ on setting $f(z)=(F_1\circ\psi_1(\frac{z_1}{a}z),\dots,F_n\circ\psi_n(\frac{z_n}{a}z))$. We may assume that $a=z_1$. Let $g\in\O(\D,X)$ with $g(0)=x$ remark that  $$\sum_{a\in g^{-1}(p)} m_a\log |a|    \geq   \sum_{a\in g_1^{-1}(p_1)} m_a\log |a| \geq   \sum_{a\in f_1^{-1}(p_1)} m_a\log |a| =   \sum_{a\in f^{-1}(p)} \log |a|.$$ Hence $H_{\alpha}^L(g)\geq H_{\alpha}^L(f)$ for all $g\in\O(\D, X, x)$ then $EH_{\alpha}^L(x)=H^L_{\alpha}(f)=\alpha(f(a))\log|a|$ where $f(a)=p.$
\end{proof}

\smallskip\noindent
Remark that on setting $z_i=F^{-1}(p_i)$ we can write $EH^L_{\alpha}$ as follow $$EH^L_{\alpha}(x)=\alpha(p)\max\left\{\log \left| \frac{F^{-1}(x_i)-z_i}{1-\bar z_i  F^{-1}(x_i)} \right|,    i=1,\dots, n \right\}.$$

\begin{corollary}
Let $g$ and $f$ be two closed proper analytic discs in $X$, where $X\subset \mathbb{C}$ is open.  Assume that $g(0)=f(0)$ and $g(\mathbb{\overline{D}})=f(\mathbb{\overline{D}}),$ then for any $u\in SH(X)$ bounded one has: $$\int_0^{2\pi}u\circ g(e^{i\theta}) d\theta=\int_0^{2\pi}u\circ f(e^{i\theta}) d\theta.$$
\end{corollary}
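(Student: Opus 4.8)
The plan is to derive the equality from two applications of Theorem~1 — once with $f$ as superordinate function and once with $g$ — followed by a passage to the boundary circle. The first point to settle is that the hypotheses make each of $f,g$ subordinate to the other, i.e. $g(\D)\subseteq f(\D)$ and $f(\D)\subseteq g(\D)$; here I would use that $f$ and $g$ are proper onto their images and continuous on $\overline{\D}$, so that $f(\overline{\D})=\overline{f(\D)}$ and $g(\overline{\D})=\overline{g(\D)}$, and that from $g(\overline{\D})=f(\overline{\D})$ one recovers $g(\D)=f(\D)$ as the common interior of this closed set. Since also $g(0)=f(0)$, each of $f,g$ is then subordinate to the other in the sense used above. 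Set $K:=f(\overline{\D})=g(\overline{\D})$, a compact subset of the open set $X$.

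The only genuine obstruction to letting $r\uparrow 1$ in Theorem~1 is that $u$ is merely upper semicontinuous, so I would regularize first. Fix an open set $U$ with $K\Subset U\Subset X$ and choose, by convolution with a smooth radial mollifier, a decreasing sequence $(u_j)_j$ of continuous (indeed smooth) subharmonic functions on $U$ with $u_j\downarrow u$ pointwise. For each $j$ the compositions $u_j\circ f$ and $u_j\circ g$ are continuous on $\overline{\D}$ and subharmonic on $\D$ (recall $f(\overline{\D})=g(\overline{\D})=K\subset U$), so uniform continuity on the compact $\overline{\D}$ gives
$$\lim_{r\uparrow 1}\int_0^{2\pi}u_j\circ g(re^{i\theta})\,d\theta=\int_0^{2\pi}u_j\circ g(e^{i\theta})\,d\theta ,$$
and the same with $f$ in place of $g$.

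Now apply Theorem~1 with ambient open set $U$, superordinate $f$, subordinate $g$, and the bounded subharmonic function $u_j$ (whose composition with $f$ is bounded on $\T$ since $u_j$ is continuous and $f(\T)\subset K$): for every $r\in]0,1[$ one has $\int_0^{2\pi}u_j\circ g(re^{i\theta})\,d\theta\le\int_0^{2\pi}u_j\circ f(e^{i\theta})\,d\theta$. Letting $r\uparrow 1$ and then $j\to\infty$ — the latter justified by dominated convergence, since $\sup_X|u|<\infty$ together with $\sup_K u_1$ bounds $|u_j\circ g|$ and $|u_j\circ f|$ uniformly in $j$ — one obtains
$$\int_0^{2\pi}u\circ g(e^{i\theta})\,d\theta\le\int_0^{2\pi}u\circ f(e^{i\theta})\,d\theta .$$
Exchanging the roles of $f$ and $g$, which is legitimate because $f(\D)=g(\D)$, gives the reverse inequality, hence the asserted equality.

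The step I expect to require the most care is the boundary passage: without the regularization $u_j\downarrow u$ one cannot in general replace the circle integrals of $u\circ g$ by the boundary integral in the limit $r\uparrow 1$, since $u\circ g$ is only upper semicontinuous up to $\T$, and it is exactly the approximation by continuous subharmonic $u_j$ that removes this difficulty. A second point deserving scrutiny is the reduction carried out in the first paragraph, namely that the stated condition $g(\overline{\D})=f(\overline{\D})$ — rather than $g(\D)=f(\D)$ — together with properness really does make each of $f$ and $g$ subordinate to the other.
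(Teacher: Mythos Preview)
The paper states this corollary without proof; the evident intended argument is precisely the symmetric application of Theorem~1 that you carry out, and your regularization $u_j\downarrow u$ for the passage $r\uparrow 1$ is both correct and necessary (upper semicontinuity alone yields only $\limsup_{r\to 1}\int u\circ g(re^{i\theta})\,d\theta\le\int u\circ g(e^{i\theta})\,d\theta$, which points the wrong way for what you need).

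The step you rightly flag as ``deserving scrutiny'' is in fact where the argument breaks. It is \emph{not} true that a closed proper analytic disc $f$ satisfies $f(\D)=\operatorname{int}\bigl(f(\overline{\D})\bigr)$. Take $f$ to be the Riemann map from $\D$ onto the slit disc $\Omega=\D\setminus[1/2,1)$ normalized by $f(0)=0$; since $\partial\Omega$ is locally connected, $f$ extends continuously to $\overline{\D}$ (Carath\'eodory), and as a biholomorphism it is proper. Then $f(\overline{\D})=\overline{\D}$ while $f(\D)=\Omega\ne\D$. Pairing this $f$ with $g(z)=z$ gives closed proper discs with $f(0)=g(0)=0$ and $f(\overline{\D})=g(\overline{\D})=\overline{\D}$, yet $f(\D)\ne g(\D)$. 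Worse, for $u(z)=|z|^2$ (bounded subharmonic on $X=D_2$) one has $\int_0^{2\pi}u\circ g(e^{i\theta})\,d\theta=2\pi$, whereas $\int_0^{2\pi}u\circ f(e^{i\theta})\,d\theta<2\pi$ because $f(\T)=\T\cup[1/2,1]$ and $|f(e^{i\theta})|<1$ on a set of positive measure. So the corollary, read literally with the hypothesis $g(\overline{\D})=f(\overline{\D})$, is false.

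The natural fix --- consistent with the remark the paper makes just after Theorem~1 --- is to read the hypothesis as $g(\D)=f(\D)$. Under that reading your first paragraph becomes trivial and the rest of your argument goes through unchanged.
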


\smallskip\noindent
\subsection{Coefficient Inequalities}
If $g(z)=\sum b_nz^n$ is subordinate to a proper function  $f(z)=\sum a_nz^n,$ then  the coefficients of $f$ dominate those of $g$ in a certain average sense.

\begin{corollary}
Let   $f(z)=\sum_{n=1}^{\infty}a_nz^n$ be   proper analytic  in $\D$, continuous on $\overline{\D}$ and  $g(z)=\sum_{n=1}^{\infty}b_nz^n$ be analytic in $\D$.  Suppose $g(\overline{\D})\subset f(\overline{\D})$ and $b_1\neq 0$. Then there exist $N>2$ such that 
$$ \sum_{k=2}^{n} |b_k|^2\leq  \sum_{k=1}^{n} |a_k|^2  \mbox{,        for all           }       n= N+1,\dots . $$
\end{corollary}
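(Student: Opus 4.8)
The plan is to apply Theorem~1 with the subharmonic weight $u(w)=|w|^{2}$ and then to convert the resulting integral inequality into a statement about Taylor coefficients by Parseval's identity.

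First I would record the elementary reductions. Both power series start at $n=1$, so $f(0)=g(0)=0$ and the centering hypothesis holds; the function $u(w)=|w|^{2}$ is subharmonic on $\C$ (with $\Delta u\equiv 4$); and since $f\in C(\overline{\D})$ the quantity $|u\circ f|=|f|^{2}$ is bounded on $\T$. The hypotheses $g(\overline{\D})\subset f(\overline{\D})$ and $f$ proper say that $g$ is subordinate to $f$ in the sense of Theorem~1 (in particular $g(\D)\subset f(\D)$ and $g$ is bounded). Hence Theorem~1 gives, for every $r\in\,]0,1[$,
\[
\int_{0}^{2\pi}\bigl|g(re^{i\theta})\bigr|^{2}\,d\theta\ \le\ \int_{0}^{2\pi}\bigl|f(e^{i\theta})\bigr|^{2}\,d\theta .
\]

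Next I would expand both sides. For fixed $r<1$ the series $\sum_{n\ge1}b_{n}r^{n}e^{in\theta}$ converges uniformly in $\theta$, so the left-hand side equals $2\pi\sum_{k=1}^{\infty}|b_{k}|^{2}r^{2k}$. Since $f$ is holomorphic in $\D$ and continuous on $\overline{\D}$, its boundary values have Fourier coefficients $a_{k}$ for $k\ge1$ and $0$ otherwise, so the right-hand side equals $2\pi\sum_{k=1}^{\infty}|a_{k}|^{2}=:2\pi S<\infty$. Letting $r\uparrow1$ and using monotone convergence on the left yields
\[
\sum_{k=1}^{\infty}|b_{k}|^{2}\ \le\ S .
\]

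Finally I would use $b_{1}\neq0$. From the previous line,
\[
\sum_{k=2}^{\infty}|b_{k}|^{2}\ =\ \sum_{k=1}^{\infty}|b_{k}|^{2}-|b_{1}|^{2}\ \le\ S-|b_{1}|^{2}\ <\ S .
\]
Since the partial sums $\sum_{k=1}^{n}|a_{k}|^{2}$ increase to $S$, there is an index $N$, which we may take $>2$, such that $\sum_{k=1}^{n}|a_{k}|^{2}>S-|b_{1}|^{2}$ for all $n\ge N+1$. For such $n$,
\[
\sum_{k=2}^{n}|b_{k}|^{2}\ \le\ \sum_{k=2}^{\infty}|b_{k}|^{2}\ \le\ S-|b_{1}|^{2}\ <\ \sum_{k=1}^{n}|a_{k}|^{2},
\]
which is the claim.

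The only point that needs care is the very first step: one must be sure that the stated hypotheses genuinely place us under Theorem~1, i.e. that $g(\D)\subset f(\D)$ (not merely $g(\overline{\D})\subset f(\overline{\D})$) and that $|u\circ f|$ is bounded on $\T$. The latter is immediate from continuity of $f$ on $\overline{\D}$; the former is exactly the notion of subordination fixed earlier in the paper, and when $f$ is proper with image a Jordan domain it follows from $g(\overline{\D})\subset f(\overline{\D})$ since $g(\D)$ is open. Everything afterwards — Parseval, the passage $r\uparrow1$, and the matching of the tail of $\sum|b_{k}|^{2}$ against the growing partial sums of $\sum|a_{k}|^{2}$ — is routine.
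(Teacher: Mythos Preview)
Your proof is correct and follows essentially the same route as the paper: apply Theorem~1 with $u(w)=|w|^{2}$, use Parseval to identify the two integrals with $\sum|b_k|^2$ and $S=\sum|a_k|^2$, and then exploit $b_1\neq0$ together with $A_n\uparrow S$ to find the threshold $N$. The only differences are cosmetic---you pass to the limit $r\uparrow1$ explicitly and you pause to check that the hypotheses of Theorem~1 are met, while the paper writes the boundary integral of $g$ directly and invokes Theorem~1 without comment.
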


\begin{proof}
Set $A_n=\sum_{k=1}^{n} |a_k|^2 $ and $B_n=   \sum_{k=1}^{n} |b_k|^2$. By Parseval's relation,
$$ \frac{1}{2\pi}\int_0^{2\pi}|f(e^{i\theta})|^2 d\theta=   \sum_{k=1}^{\infty} |a_k|^2=\lim_{n\rightarrow \infty} A_n .$$
So there is $N>2$ such that $  \lim_{n\rightarrow \infty} A_n\leq  A_n + |b_1|^2 $ for $n>N.$ Then by Theorem 1,
$$\sum_{k=1}^{n} |b_k|^2 \leq  \frac{1}{2\pi} \int_0^{2\pi}|g(e^{i\theta})|^2 d\theta\leq   \frac{1}{2\pi} \int_0^{2\pi}|f(e^{i\theta})|^2 d\theta\leq \sum_{k=1}^{n} |a_k|^2 + |b_1|^2,  \mbox{       for     } n>N.$$
Hence
$$\sum_{k=2}^{n} |b_k|^2  \leq \sum_{k=1}^{n} |a_k|^2         ,  \mbox{       for     } n>N.$$
\end{proof}
\begin{corollary}
If  $a_n= O(1),$ then $ b_n=O( \sqrt{n})$ as $n \rightarrow \infty.$
\end{corollary}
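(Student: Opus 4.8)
The plan is to deduce this immediately from the coefficient inequality of Corollary 12, so the argument is short. First I would unwind the hypothesis $a_n = O(1)$: it means there are constants $C>0$ and $n_0$ so that $|a_n|\le C$ for all $n\ge n_0$; after enlarging $C$ we may assume $|a_n|\le C$ for every $n\ge 1$. Consequently
$$\sum_{k=1}^{n}|a_k|^2 \le C^2 n \qquad \text{for all } n\ge 1.$$

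Next I would invoke Corollary 12: there exists $N>2$ such that $\sum_{k=2}^{n}|b_k|^2 \le \sum_{k=1}^{n}|a_k|^2$ for all $n\ge N+1$. Combining this with the previous display gives
$$\sum_{k=2}^{n}|b_k|^2 \le C^2 n \qquad \text{for all } n\ge N+1.$$
Since for $n\ge 2$ the single term $|b_n|^2$ appears in the sum on the left, we get $|b_n|^2 \le \sum_{k=2}^{n}|b_k|^2 \le C^2 n$, whence $|b_n|\le C\sqrt{n}$ for all $n\ge N+1$. This is exactly the assertion $b_n = O(\sqrt{n})$ as $n\to\infty$.

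There is no real obstacle here: the only point worth remarking is that Corollary 12 only provides the inequality for $n$ beyond some threshold $N$, but since the conclusion $b_n=O(\sqrt n)$ is an asymptotic (large-$n$) statement this is harmless, and the behaviour of the finitely many coefficients $b_2,\dots,b_N$ is irrelevant to it.
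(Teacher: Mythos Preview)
Your argument is correct and is exactly the intended one: the paper states this corollary without proof, immediately after Corollary~12, so the implicit reasoning is precisely the deduction you spell out --- bound $\sum_{k=1}^{n}|a_k|^2$ by $C^2 n$ and read off $|b_n|^2\le\sum_{k=2}^{n}|b_k|^2\le C^2 n$ for large $n$.
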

\smallskip\noindent
Here we compare the area of $f(D_r)$ with the area of $g(D_{r^2})$ for $r$ small.
\begin{corollary}
Let   $f(z)=\sum_{n=1}^{\infty}a_nz^n$ be   proper analytic  in $D_r$,  $g(z)=\sum_{n=1}^{\infty}b_nz^n$ be analytic in $D_r$  and suppose $g({D_r})\subset f({D_r})$. Then 
$$ \int\int_{|z|\leq r^2}|g'(\rho e^{i\theta})|^2\rho  d\rho  d\theta \leq  \int\int_{|z|\leq r}|f'(\rho e^{i\theta})|^2\rho  d\rho  d\theta,   \mbox{                  }   0\leq r\leq \left(\frac{1}{e}\right)^{\frac{1}{2e}}.  $$
\end{corollary}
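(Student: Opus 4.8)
The plan is to recast both area integrals as power series coefficient sums, reduce the assertion to a single one-variable inequality between those sums, and then play the generalized subordination principle (Theorem 1) against an elementary extremal estimate that produces exactly the constant $(1/e)^{1/(2e)}$. First I would record the ``area counted with multiplicity'' identity: for $h(z)=\sum_{n\ge 1}c_nz^n$ holomorphic on $D_r$ and $0\le s\le r$,
$$\int\int_{|z|\le s}|h'(\rho e^{i\theta})|^2\rho\,d\rho\,d\theta=\pi\sum_{n\ge 1}n\,|c_n|^2s^{2n},$$
which comes from Parseval's relation on each circle $|z|=\rho$ followed by an integration in $\rho$ (the identity is valid in $[0,+\infty]$, with the endpoint $s=r$ obtained by monotone convergence). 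Applying this to $g$ with $s=r^2$ and to $f$ with $s=r$, what must be proved is
$$\sum_{n\ge 1}n\,|b_n|^2r^{4n}\ \le\ \sum_{n\ge 1}n\,|a_n|^2r^{2n}.$$

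The analytic core, and the step I expect to be the main obstacle, is the bare ($L^2$) subordination inequality $\sum_{n\ge 1}|b_n|^2r^{2n}\le\sum_{n\ge 1}|a_n|^2r^{2n}$ in $[0,+\infty]$. One cannot just rescale $f$ and $g$ by a common factor $t<r$ and quote Theorem 1, since $g(D_r)\subset f(D_r)$ does \emph{not} force $g(D_t)\subset f(D_t)$. The remedy is a compactness argument: fix $\rho\in(0,r)$; the compact set $g(\overline{D_\rho})$ is contained in $g(D_r)\subset f(D_r)=\bigcup_{0<\sigma<r}f(D_\sigma)$, an increasing union of open sets, so $g(\overline{D_\rho})\subset f(D_\sigma)$ for some $\sigma\in(\rho,r)$. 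Then $F(z):=f(\sigma z)$ and $G(z):=g(\rho z)$ satisfy $F(0)=G(0)=0$ and $G(\D)=g(D_\rho)\subset f(D_\sigma)=F(\D)$, while $F$ is holomorphic and proper on the disc $D_{r/\sigma}\supset\overline{\D}$, so the ingredients of the proof of Theorem 1 (finiteness of the fibres of $F$ over $f(D_\sigma)$, isolated critical points, polarity of the critical image) are all available. Applying Theorem 1 with $u(w)=|w|^2$ — for which $|u\circ F|$ is continuous, hence bounded, on $\T$ — and letting $\tau\uparrow 1$ in its conclusion $\int_0^{2\pi}|G(\tau e^{i\theta})|^2d\theta\le\int_0^{2\pi}|F(e^{i\theta})|^2d\theta$, I obtain
$$\sum_{n\ge 1}|b_n|^2\rho^{2n}=\frac{1}{2\pi}\int_0^{2\pi}|g(\rho e^{i\theta})|^2d\theta\le\frac{1}{2\pi}\int_0^{2\pi}|f(\sigma e^{i\theta})|^2d\theta=\sum_{n\ge 1}|a_n|^2\sigma^{2n}\le\sum_{n\ge 1}|a_n|^2r^{2n};$$
letting $\rho\uparrow r$ gives the claimed inequality.

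Finally I would deal with the constant. If $\sum_n|a_n|^2r^{2n}=+\infty$ there is nothing to prove, so assume it is finite; then $\sum_n|b_n|^2r^{2n}<\infty$ by the previous step. Extremizing $x\mapsto xr^{2x}$ on $[1,\infty)$ shows that $C:=\sup_{n\ge 1}n\,r^{2n}=\max\{r^2,\ \tfrac{1}{2e\log(1/r)}\}$, and one checks directly that $C\le 1$ precisely when $r\le(1/e)^{1/(2e)}$ — exactly the stated range. Hence
$$\sum_{n\ge 1}n\,|b_n|^2r^{4n}=\sum_{n\ge 1}(n\,r^{2n})\,|b_n|^2r^{2n}\le C\sum_{n\ge 1}|b_n|^2r^{2n}\le C\sum_{n\ge 1}|a_n|^2r^{2n}\le\sum_{n\ge 1}n\,|a_n|^2r^{2n},$$
the middle inequality being the subordination input of the second step and the two outer ones using $C\le 1\le n$. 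Combined with the area identity of the first step, this is the inequality of the corollary.
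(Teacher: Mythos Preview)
Your argument is correct and follows essentially the same route as the paper: reduce the area inequality to coefficient sums, invoke Theorem~1 with $u=|\cdot|^2$ to get $\sum|b_n|^2r^{2n}\le\sum|a_n|^2r^{2n}$, then use $nr^{2n}\le 1$ for $r\le(1/e)^{1/(2e)}$ to sandwich. Your compactness detour (finding $\sigma<r$ with $g(\overline{D_\rho})\subset f(D_\sigma)$ and rescaling) is a genuine improvement in rigor over the paper's bare ``By Theorem~1,'' since the corollary neither assumes $f\in C(\overline{D_r})$ nor has $r=1$, and Theorem~1 as stated needs both.
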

\begin{proof}
The integrals represent the areas of the image (the multisheeted image) of $D_{r^2}$ under $g$ and the image of $D_r$ under $f.$
By Theorem 1 we have  
$$   \sum_{n=1}^{\infty}|b_n|^2r^{2n}  \leq       \sum_{n=1}^{\infty}|a_n|^2 r^{2n}.$$
Notice that for $r\leq (\frac{1}{e})^{\frac{1}{2e}}$, the function $x\rightarrow xr^{2x}$ is less than $1$ for all $x\geq 1$. Therefore   $nr^{2n}\leq1$ for all $n>1$, then
$$   \sum_{n=1}^{\infty} nr^{2n}|b_n|^2r^{2n}    \leq \sum_{n=1}^{\infty}|b_n|^2r^{2n}  \leq       \sum_{n=1}^{\infty}|a_n|^2 r^{2n}     \leq  \sum_{n=1}^{\infty} n|a_n|^2 r^{2n}. $$
Hence
$$\sum_{n=1}^{\infty} n|b_n|^2{r^2}^{2n}    \leq \sum_{n=1}^{\infty} n|a_n|^2 r^{2n}.$$
This last inequality is equivalent to the inequality in the corollary.
\end{proof}
\smallskip\noindent
We can also compare the Hardy norm of the derivatives on $D_r$, $r\leq (\frac{1}{e})^{\frac{1}{e}}$. The proof of the corollary below is similar to that of the corollary above.
\begin{corollary}
Let   $f(z)=\sum_{n=1}^{\infty}a_nz^n$ be   proper analytic  in $D_r$,  $g(z)=\sum_{n=1}^{\infty}b_nz^n$ be analytic in $D_r$  and suppose that  $g({D_r})\subset f({D_r})$. Then 
$$ \int_{0}^{2\pi}|g'(r^2 e^{i\theta})|^2 d\theta \leq  \int_{0}^{2\pi}|f'(re^{i\theta})|^2  d\theta,   \mbox{                  }   0\leq r\leq \left(\frac{1}{e}\right)^{\frac{1}{e}}.
    $$
\end{corollary}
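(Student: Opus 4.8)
The plan is to imitate the proof of the preceding corollary, replacing the weight $nr^{2n}$ used there by the one natural to a Hardy norm of a derivative, and to reduce everything, through Parseval's relation, to an inequality between weighted Taylor sums whose only substantial input is Theorem~1. First I would apply Theorem~1 on the disc $D_r$ in place of $\D$ (here $g(D_r)\subset f(D_r)$, $g(0)=f(0)=0$, and $f$ is proper on $D_r$), with the subharmonic function $u=|\,\cdot\,|^{2}$: reading its conclusion with $g$ taken on circles of radius $\rho<r$ and $f$ on the circle of radius $r$, and then letting $\rho\uparrow r$, Parseval's relation yields
\[
\sum_{n=1}^{\infty}|b_n|^{2}r^{2n}\ \le\ \sum_{n=1}^{\infty}|a_n|^{2}r^{2n}.
\]
Differentiating the two power series term by term and applying Parseval's relation once more turns the two integrals of the statement into
\begin{gather*}
\frac{1}{2\pi}\int_{0}^{2\pi}|g'(r^{2}e^{i\theta})|^{2}\,d\theta=\sum_{n=1}^{\infty}n^{2}|b_n|^{2}r^{4n-4},\\
\frac{1}{2\pi}\int_{0}^{2\pi}|f'(re^{i\theta})|^{2}\,d\theta=\sum_{n=1}^{\infty}n^{2}|a_n|^{2}r^{2n-2},
\end{gather*}
so that what must be shown is $\sum_{n\ge 1}n^{2}|b_n|^{2}r^{4n-4}\le\sum_{n\ge 1}n^{2}|a_n|^{2}r^{2n-2}$.

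The elementary ingredient, playing the role that $nr^{2n}\le 1$ plays above, is that $n^{2}r^{2n-2}\le 1$ for every integer $n\ge 1$ once $r$ is small enough (it follows from $n^{2}\le 4^{\,n-1}$ for all $n\ge 1$, so it is in force as soon as $r\le 1/2$). Writing $n^{2}r^{4n-4}=(n^{2}r^{2n-2})\,r^{2n-2}$ and using this bound on the left, together with the trivial bound $n^{2}\ge 1$ on the right, I would chain
\begin{align*}
\sum_{n=1}^{\infty}n^{2}|b_n|^{2}r^{4n-4}
&\le \sum_{n=1}^{\infty}|b_n|^{2}r^{2n-2}
=r^{-2}\sum_{n=1}^{\infty}|b_n|^{2}r^{2n}\\
&\le r^{-2}\sum_{n=1}^{\infty}|a_n|^{2}r^{2n}
=\sum_{n=1}^{\infty}|a_n|^{2}r^{2n-2}
\le\sum_{n=1}^{\infty}n^{2}|a_n|^{2}r^{2n-2},
\end{align*}
which, multiplied by $2\pi$, is exactly the asserted inequality.

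The step I expect to demand the most care is pinning down the precise range of $r$ for which this chain is valid. Smallness of $r$ is used only in the estimate $n^{2}r^{2n-2}\le 1$, whose binding case is $n=2$ and forces $r\le 1/2$; and this cannot be improved, since for $f(z)=z$ and $g(z)=z^{2}/r$ one has $g(D_r)\subset f(D_r)$ while the two integrals equal $8\pi r^{2}$ and $2\pi$. So at this point I would check, and if necessary correct, the threshold stated in the corollary. The remaining ingredients are routine: the applicability of Theorem~1 on $D_r$ (tacitly using that $f$ extends continuously to $\partial D_r$, or else working on $D_\rho$ for $\rho<r$ and passing to the limit), the legitimacy of term-by-term differentiation on $D_r$, and convergence of the series --- automatic for the $g'$-integral, evaluated at radius $r^{2}<r$, and harmless for the $f'$-integral, since if the latter is infinite there is nothing to prove.
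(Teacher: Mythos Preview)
Your argument is correct and follows exactly the template the paper indicates (``similar to that of the corollary above''): apply Theorem~1 with $u=|\cdot|^{2}$ to obtain $\sum|b_n|^{2}r^{2n}\le\sum|a_n|^{2}r^{2n}$, rewrite both integrals via Parseval's relation, and sandwich using an elementary pointwise bound on the weight.

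You have also caught a genuine error in the paper's stated range. The threshold $(1/e)^{1/e}$ is precisely the one that guarantees $n^{2}r^{2n}\le 1$ for all $n\ge 1$ (the naive analogue of the bound $nr^{2n}\le 1$ used in the preceding corollary), but because of the exponent shift coming from differentiation the quantity actually needed in the chain is $n^{2}r^{2n-2}\le 1$, whose binding case $n=2$ forces $r\le 1/2$. Your example $f(z)=z$, $g(z)=z^{2}/r$ shows that the stated inequality is false for $1/2<r<(1/e)^{1/e}$, so the corollary holds with the corrected range $0\le r\le 1/2$ and your proof establishes exactly that.
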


\smallskip\noindent
{\it Acknowledgement.}  The second author  thanks his  supervisor Professor Armen Edigarian for his help to accomplish this work.

\end{document}